\theoremstyle{plain}
\newtheorem{lem}{Lemma}
\theoremstyle{plain}
\newtheorem{thm}{Theorem}
\theoremstyle{definition}
\newtheorem{rem}{Remark}
\newcounter{sv}
\theoremstyle{plain}
\newtheorem*{ph}{\addtocounter{sv}{1}$\mathbf{\Phi}$\arabic{sv}}
\renewcommand{\email}[2][]{%
	\ifx\emails\@empty\relax\else{\g@addto@macro\emails{,\space}}\fi%
	\@ifnotempty{#1}{\g@addto@macro\emails{#2\space}}%
	\g@addto@macro\emails{(\textrm{#1})}%
}
\begin{document}
\author[A.\,Yu.\,Ulitskaya]{A.\,Yu.\,Ulitskaya}
\address{Chebyshev Laboratory, St. Petersburg State University,\\14th Line V.O., 29B, Saint Petersburg 199178 Russia}
\email[A.\,Yu.\,Ulitskaya]{baguadadao@gmail.com}

\title[Fourier analysis in spaces of shifts]{Fourier analysis in spaces of shifts}

\begin{abstract}
In this paper, we develop a continual analog of decomposition over orthogonal bases in spaces generated by equidistant shifts of a single function. By doing so, we obtain an explicit expression for best approximation by spaces of shifts in $L_2(\mathbb{R})$. The result is formulated in terms of classical Fourier transform and tends to have various applications in approximation by spaces of shifts and, in particular, in spline approximation.

\emph{Keywords:} spaces of shifts, best approximation, Fourier transform
\end{abstract}
	
\subjclass[2010]{42A38, 41A30}
\maketitle

\section{Notation}
In what follows, $\mathbb{R}$, $\mathbb{Z}$, $\mathbb{Z}_+$, $\mathbb{N}$ are the sets of real, integer, nonnegative integer, and natural numbers, respectively. Unless otherwise follows from the context, all the functional spaces under consideration can be real or complex.

If $p\in[1,+\infty)$, $L_p(\mathbb{R})$ is the space of measurable, $p$-integrable on $\mathbb{R}$ functions; $L_\infty(\mathbb{R})$ is the space of essentially bounded functions on $\mathbb{R}$. The norms in these spaces are defined by
$$
\|f\|_{L_p(\mathbb{R})}=\left(\int\limits_\mathbb{R}|f|^p\right)^{1/p},\quad
\|f\|_{L_\infty(\mathbb{R})}=\underset{\mathbb{R}}{\mathrm{vrai\,sup\,}}|f|,
$$
respectively.
The spaces $L_p[\,a,b]$ are defined similarly. Furthermore, for $p\in[1,+\infty)$, let $\ell_p(\mathbb{Z})$ be
the space of two-sided sequences~${a=\{a_k\}_{k\in\mathbb{Z}}}$ for which $\|a\|_{\ell_p(\mathbb{Z})}=\left(\sum\limits_{k\in\mathbb{Z}}|a_k|^p\right)^{1/p}<+\infty$. 
  
The space of compactly supported infinitely differentiable functions on $\mathbb{R}$ is denoted by $C^{(\infty)}_0(\mathbb{R})$.

The symbol $\langle\cdot,\cdot\rangle_\mathcal{H}$ denotes the inner product in the Hilbert space~$\mathcal{H}$;
$$
E(f,\mathfrak{N})_p=\inf\limits_{T\in\mathfrak{N}}\|f-T\|_p
$$
is best approximation of $f$ in $L_p$ by the set $\mathfrak{N}\subset L_p$.

The Fourier transform of the function $f\in L_1(\mathbb{R})$ is defined by the formula
$$
\widehat{f}(y)=\frac{1}{2\pi}\int\limits_\mathbb{R}f(x)e^{-ixy}\,dx.
$$
The definition of the Fourier transform of $f\in L_2(\mathbb{R})$ is extended from $L_1(\mathbb{R})\cap L_2(\mathbb{R})$ standardly.

For $\sigma>0$, $\mu\in\mathbb{Z}_+$, denote by $\mathbf{S}_{\sigma,\mu}$ the space of splines of degree~$m$ and defect~$1$ with knots at the points $\dfrac{j\pi}{\sigma}$, $j\in\mathbb{Z}$.

\section{Introduction}
Spaces generated by shifts of a single function play a significant role in approximation theory, wavelets theory, and applications.
Approximative properties of such spaces are of wide interest; see, for example~\cite{skopina1,ron2,ol4}.

Moreover, spaces of shifts often become optimal approximating subspaces in the sense of widths. This means that constants in estimates for approximation of certain classes of functions by spaces of shifts cannot be decreased by passing to another approximating subspace of the same dimension (or, in the infinite dimensional case, average dimension).
The most classical result on this subject is probably optimality of spline spaces for Sobolev classes of functions (see, for example,~\cite{korn,sun}). In the general situation, extremality results for approximation by spaces of shifts were obtained by Vinogradov (\cite{ol1,ol2,ol3}) in periodic and non-periodic spaces $C$ and $L_1$.

In~\cite{we} Vinogradov and Ulitskaya studied the following problem of $L_2$-approximation by spaces of shifts. For $n\in\mathbb{N}$ and periodic function $B\in L_1$ consider the space~$\mathcal{S}_{B,n}$ of functions~$s$ that are defined on $\mathbb{R}$ and representable in the form
$$
s(x)=\sum\limits_{j=0}^{2n-1}\beta_jB\left(x-\frac{j\pi}{n}\right).
$$
It is easy to show that the space~$\mathcal{S}_{B,n}$ coincides with the linear span of the functions
$$
\Phi_{B,l}(x)=\Phi_{B,n,l}(x)=\frac{1}{2n}\sum\limits_{j=0}^{2n-1}e^{\frac{ilj\pi}{n}}B\left(x-\frac{j\pi}{n}\right),\quad l\in[1-n:n].
$$
If the set $\{B\left(\cdot-\frac{j\pi}{n}\right)\}_{j=1-n}^{n}$ is linearly independent, the functions $\Phi_{B,l}$ form an orthogonal basis in the space~$\mathcal{S}_{B,n}$. For $m<n$, denote by $\mathcal{S}^\times_{B,n,m}$ the linear span of the set $\{\Phi_{B,l}\}_{l=1-m}^{m-1}$.

We find optimality criteria for approximation of Sobolev classes of periodic functions by the spaces~$\mathcal{S}_{B,n}$ and~$\mathcal{S}^\times_{B,n,m}$ in $L_2$. These criteria are given in terms of Fourier coefficients of the function~$B$. The proof is based on Fourier series decomposition over orthogonal bases in spaces of shifts formed by the functions $\Phi_{B,l}$. This decomposition allows us to find an explicit expression for best approximation by $\mathcal{S}_{B,n}$ and~$\mathcal{S}^\times_{B,n,m}$. Later this technique was used in~\cite{me} to obtain a criteria of optimality of spaces under consideration for classes of periodic convolutions.


Similar approximation problems in the space $L_2(\mathbb{R})$ force us to develop an analog of this method in continual case. In this paper, we elaborate the Fourier transform method in spaces of shifts and obtain an explicit expression for best mean-square approximation by these spaces in terms of classical Fourier transform.

\section{Main results}
Let $\sigma>0$, $B\in L_2(\mathbb{R})$ and consider the functions
\begin{align}
	\label{1.0}
	\Phi_{B,\sigma}(x,y)&=\frac{1}{2\sigma}\sum\limits_{j\in\mathbb{Z}}B\left(x-\frac{j\pi}{\sigma}\right)e^{i\tfrac{j\pi}{\sigma}y},\\
	\label{1.1}
	D_{B,\sigma}(y)&=\sum\limits_{\nu\in\mathbb{Z}}\left|\widehat{B}(y+2\nu\sigma)\right|^2.
\end{align}
The definition of $\Phi_{B,\sigma}$ is correct since for a.e. $x\in\mathbb{R}$ the series on the right-hand side of~\eqref{1.0} converges in $L_2[\,-\sigma,\sigma]$.

Let us mention some properties of the functions~$\Phi_{B,\sigma}$. Note that they are quite similar to the properties of the Zak transform (see, for example,~\cite[\S8.2]{grech}).

\begin{ph}
	$\Phi_{B,\sigma}\in L_2\left(\left[\,0,\frac{\pi}{\sigma}\right]\times[\,-\sigma,\sigma]\right)$	and
	$$
	\int\limits_{0}^{\frac{\pi}{\sigma}}
	\int\limits_{-\sigma}^\sigma\left|\Phi_{B,\sigma}(x,y)\right|^2dy\,dx=
	\frac{1}{2\sigma}\|B\|^2_{L_2(\mathbb{R})}.
	$$
\end{ph}
\begin{proof}
	Since $\left\{B\left(x-\frac{j\pi}{\sigma}\right)\right\}_{j\in\mathbb{Z}}
	\in\ell_2(\mathbb{Z})$ for a.e.~$x\in\mathbb{R}$, we get 
	$\Phi_{B,\sigma}(x,\cdot)\in L_2[-\sigma,\sigma]$ for a.e.~$x$. Furthermore, by Parseval's equality, we have
	$$
	\int\limits_{-\sigma}^\sigma\left|\Phi_{B,\sigma}(x,y)\right|^2dy=
	\frac{1}{2\sigma}\sum\limits_{j\in\mathbb{Z}}\left|B\left(x-\frac{j\pi}{\sigma}\right)\right|^2.
	$$
This yields
	$$
	\int\limits_{0}^{\frac{\pi}{\sigma}}
	\int\limits_{-\sigma}^\sigma\left|\Phi_{B,\sigma}(x,y)\right|^2dy\,dx=
	\frac{1}{2\sigma}\int\limits_{0}^{\frac{\pi}{\sigma}}
	\sum\limits_{j\in\mathbb{Z}}\left|B\left(x-\frac{j\pi}{\sigma}\right)\right|^2dx=
	\frac{1}{2\sigma}\|B\|^2_{L_2(\mathbb{R})}.
	$$
\end{proof}

\begin{ph}
	The following relations hold:
	\begin{gather*}
		\Phi_{B,\sigma}(x,y+2\sigma)=\Phi_{B,\sigma}(x,y),\quad\Phi_{B,\sigma}\left(x+\frac{\pi}{\sigma},y\right)=
		e^{i\frac{\pi}{\sigma}y}\Phi_{B,\sigma}(x,y),\\ \overline{\Phi_{B,\sigma}(x,y)}=\Phi_{B,\sigma}(x,-y).
	\end{gather*}	
\end{ph}

This property is clear. Moreover, it implies that the function~$\Phi_{B,\sigma}$ is completely defined by its values on $\left[\,0,\frac{\pi}{\sigma}\right]\times[\,-\sigma,\sigma]$.

\begin{ph}
	In $L_2\left(\left[\,0,\frac{\pi}{\sigma}\right]\times[\,-\sigma,\sigma]\right)$, we have
	\begin{equation}
		\label{1.2}
		\Phi_{B,\sigma}(x,y)=\sum\limits_{\nu\in\mathbb{Z}}
		\widehat{B}(y+2\nu\sigma)e^{i(y+2\nu\sigma)x}.
	\end{equation}
\end{ph}	
\begin{proof}
	First we point out that one can obtain the equality
	\begin{equation}
		\label{1.22}
		\int\limits_{-\sigma}^\sigma\int\limits_0^{\frac{\pi}{\sigma}}\left|\sum\limits_{\nu\in\mathbb{Z}}
		\widehat{B}(y+2\nu\sigma)e^{i(y+2\nu\sigma)x}\right|^2dx\,dy=
		\frac{1}{2\sigma}\|B\|^2_{L_2(\mathbb{R})},
	\end{equation}
using the same argument as in the proof of~$\Phi1$.
	Indeed, since $\left\{\widehat{B}(y+2\nu\sigma)\right\}_{\nu\in\mathbb{Z}}
	\in\ell_2(\mathbb{Z})$ for a.e.~$y\in\mathbb{R}$, by Parseval's equality, we have for a.e.~$y\in\mathbb{R}$
	\begin{gather*}
		\int\limits_0^{\frac{\pi}{\sigma}}\left|\sum\limits_{\nu\in\mathbb{Z}}
		\widehat{B}(y+2\nu\sigma)e^{i(y+2\nu\sigma)x}\right|^2dx=
		\int\limits_0^{\frac{\pi}{\sigma}}\left|\sum\limits_{\nu\in\mathbb{Z}}
		\widehat{B}(y+2\nu\sigma)e^{2i\nu\sigma x}\right|^2dx=\\
		=\frac{\pi}{\sigma}\sum\limits_{\nu\in\mathbb{Z}}
		\left|\widehat{B}(y+2\nu\sigma)\right|^2.
	\end{gather*}
	Therefore,
	\begin{gather*}
		\int\limits_{-\sigma}^\sigma\int\limits_0^{\frac{\pi}{\sigma}}\left|\sum\limits_{\nu\in\mathbb{Z}}
		\widehat{B}(y+2\nu\sigma)e^{i(y+2\nu\sigma)x}\right|^2dx\,dy=
		\frac{\pi}{\sigma}\int\limits_{-\sigma}^\sigma
		\sum\limits_{\nu\in\mathbb{Z}}\left|\widehat{B}(y+2\nu\sigma)\right|^2\,dy=\\
		=\frac{\pi}{\sigma}\|\widehat{B}\|^2_{L_2(\mathbb{R})}=
		\frac{1}{2\sigma}\|B\|^2_{L_2(\mathbb{R})}.
	\end{gather*}
	
Let $B_n\in C_0^{(\infty)}(\mathbb{R})$, $B_n\to B$ in $L_2(\mathbb{R})$. For a.e.~$x,y\in\mathbb{R}$ put
	$$
	g_n(t)=e^{iyt}B_n(x-t),\quad t\in\mathbb{R}.
	$$
	Then $g_n\in C_0^{(\infty)}(\mathbb{R})$,
	$$
	\widehat{g_n}(\xi)=e^{i(y-\xi)x}\widehat{B_n}(y-\xi),\quad \xi\in\mathbb{R},
	$$
	and the Poisson summation formula yields
	$$
	\Phi_{B_n,\sigma}(x,y)=\frac{1}{2\sigma}\sum\limits_{j\in\mathbb{Z}}
	g_n\left(j\frac{\pi}{\sigma}\right)=\sum\limits_{j\in\mathbb{Z}}
	\widehat{g_n}(2j\sigma)=
	\sum\limits_{j\in\mathbb{Z}}\widehat{B_n}(y+2j\sigma)e^{i(y+2j\sigma)x}.
	$$
	Using property~$\Phi$1 and identity~\eqref{1.22}, by passing to the limit as $n\to\infty$, we get the required assertion.
\end{proof}

Recall that a set $\{f_j\}_{j\in\mathbb{Z}}$ of elements of a Hilbert space $\mathcal{H}$ is called \emph{a Riesz system} with constants $A, B>0$ if for any $\beta\in\ell_2(\mathbb{Z})$ the series $\sum\limits_{j\in\mathbb{Z}}\beta_jf_j$ converges in $\mathcal{H}$ and
\begin{equation}\label{1.222}
	A\|\beta\|^2_{\ell_2(\mathbb{Z})}\leqslant\left\|\sum\limits_{j\in\mathbb{Z}}\beta_jf_j\right\|^2_\mathcal{H}\leqslant B\|\beta\|^2_{\ell_2(\mathbb{Z})}.
\end{equation}
If we assume that only the right-hand inequality in~\eqref{1.222} holds, then $\{f_j\}_{j\in\mathbb{Z}}$ is said to be \emph{a Bessel system}.

\begin{ph}
	Let $\left\{B\left(\cdot-\frac{j\pi}{\sigma}\right)\right\}_{j\in\mathbb{Z}}$ be a Bessel system in $L_2(\mathbb{R})$. Then the following equality holds
	\begin{equation}
		\label{1.3}
		\int\limits_{\mathbb{R}}B(t)\overline{\Phi_{B,\sigma}(t,y)}\,dt=
		2\pi D_{B,\sigma}(y)
	\end{equation}	
	(convergence of the integral on the left-hand side of~\eqref{1.3} is interpreted in  $L_2[\,-\sigma,\sigma]$).
\end{ph}

\begin{proof}
	Indeed, for $n\in\mathbb{N}$, by formulas~\eqref{1.2}, \eqref{1.1},	 we have
	\begin{gather*}	
		\int\limits_{-\sigma}^\sigma\left|\,
		\int\limits_{-n}^n B(t)\overline{\Phi_{B,\sigma}(t,y)}\,dt
		-2\pi D_{B,\sigma}(y)\right|^2dy=\\
		=\int\limits_{-\sigma}^\sigma\left|\,
		\int\limits_{-n}^n B(t)\overline{\sum\limits_{\nu\in\mathbb{Z}}
			\widehat{B}(y+2\nu\sigma)e^{i(y+2\nu\sigma)t}}\,dt
		-2\pi \sum\limits_{\nu\in\mathbb{Z}}\left|\widehat{B}(y+2\nu\sigma)\right|^2\right|^2dy=\\
		=4\pi^2\int\limits_{-\sigma}^\sigma
		\left|\sum\limits_{\nu\in\mathbb{Z}}
		\overline{\widehat{B}(y+2\nu\sigma)}
		\left(\frac{1}{2\pi}\int\limits_{-n}^n B(t)e^{-i(y+2\nu\sigma)t}\,dt
		-\widehat{B}(y+2\nu\sigma)\right)\right|^2dy.
	\end{gather*}
	Termwise inner multiplication in the second equation is valid since series~\eqref{1.2} converges in $L_2[-n,n]$ with respect to $x$. This implies by Cauchy-Schwarz inequality
	\begin{gather*}
		\int\limits_{-\sigma}^\sigma\left|\,
		\int\limits_{-n}^n B(t)\overline{\Phi_{B,\sigma}(t,y)}\,dt
		-2\pi D_{B,\sigma}(y)\right|^2dy\leqslant\\
		\leqslant4\pi^2
		\int\limits_{-\sigma}^\sigma \left(\sum\limits_{\nu\in\mathbb{Z}}
		|\widehat{B}(y+2\nu\sigma)|^2\right)
		\left(\sum\limits_{\nu\in\mathbb{Z}}\left|\frac{1}{2\pi}
		\int\limits_{-n}^n B(t)e^{-i(y+2\nu\sigma)t}\,dt
		-\widehat{B}(y+2\nu\sigma)\right|^2\right)\,dy\leqslant\\
		\leqslant4\pi^2\|D_{B,\sigma}\|_{L_{\infty}[\,-\sigma,\sigma]}
		\int\limits_{-\sigma}^\sigma
		\sum\limits_{\nu\in\mathbb{Z}}\left|\frac{1}{2\pi}
		\int\limits_{-n}^n B(t)e^{-i(y+2\nu\sigma)t}\,dt
		-\widehat{B}(y+2\nu\sigma)\right|^2dy=\\
		=4\pi^2\|D_{B,\sigma}\|_{L_{\infty}[\,-\sigma,\sigma]}
		\int\limits_{\mathbb{R}}
		\left|\frac{1}{2\pi}\int\limits_{-n}^n B(t)e^{-iyt}\,dt-\widehat{B}(y)\right|^2dy\xrightarrow[n\to\infty]{}0.
	\end{gather*}
	Finiteness of the value $\|D_{B,\sigma}\|_{L_{\infty}[\,-\sigma,\sigma]}$ is ensured by the fact that the functions $\left\{B\left(\cdot-\frac{j\pi}{\sigma}\right)\right\}_{j\in\mathbb{Z}}$ form a Bessel system (see, for example,~\cite[Remark~1.1.7]{skopina}).	
\end{proof}	

Suppose that $\left\{B\left(\cdot-\frac{j\pi}{\sigma}\right)\right\}_{j\in\mathbb{Z}}$ is a Riesz system in $L_2(\mathbb{R})$. Denote by $\mathbb{S}_{B,\sigma}$ the space of functions~$s$ defined on $\mathbb{R}$ and representable in the form
\begin{equation}
	\label{1.4}
	s(x)=\sum\limits_{j\in\mathbb{Z}}\beta_jB\left(x-\frac{j\pi}{\sigma}\right),\quad\beta\in\ell_2(\mathbb{Z}).
\end{equation}

\begin{rem}
	The definition of Riesz system implies that for every function~$s$ of the form
	$$
	s(x)=\sum\limits_{j\in\mathbb{Z}}\beta_jB\left(x-\frac{j\pi}{\sigma}\right)
	$$
	the inclusions $s\in L_2(\mathbb{R})$ and $\beta\in\ell_2(\mathbb{Z})$ are equivalent, so we get that $\mathbb{S}_{B,\sigma}$ is a subspace of $L_2(\mathbb{R})$.
\end{rem}	

\begin{rem}
	It also follows from the definition of Riesz system that the functions~$D_{B,\sigma}$~and~$1/D_{B,\sigma}$ belong to $L_\infty[\,-\sigma,\sigma]$~\cite[Theorem~1.1.6]{skopina}.
\end{rem}

For $0<\rho<\sigma$, let $\mathbb{S}_{B,\sigma,\rho}$ be the space of functions~$s$ in $\mathbb{S}_{B,\sigma}$ that can be represented as~\eqref{1.4} with the additional condition 
$$
\sum\limits_{j\in\mathbb{Z}}\beta_je^{-i\tfrac{j\pi}{\sigma}y}=0\quad
\text{for a.e.}\quad \rho<|y|\leqslant\sigma
$$
(convergence of the series is interpreted in $L_2[\,-\sigma,\sigma]$).
When $\rho=\sigma$ by $\mathbb{S}_{B,\sigma,\sigma}$ we mean $\mathbb{S}_{B,\sigma}$.	

\begin{rem}
	For every $0<\rho\leqslant\sigma$, the subspaces $\mathbb{S}_{B,\sigma,\rho}$ are closed in $L_2(\mathbb{R})$ (see, for example,~\cite[Theorem~1.1.2]{skopina}).
\end{rem}

Let $s\in\mathbb{S}_{B,\sigma,\rho}$ be the function of the form~\eqref{1.4}. Put
$$
\zeta_{B,\sigma}(s,y)=\sum\limits_{j\in\mathbb{Z}}\beta_je^{-i\tfrac{j\pi}{\sigma}y}.
$$
Clearly, $\zeta_{B,\sigma}(s)\in L_2[\,-\sigma,\sigma]$. Furthermore, $\zeta_{B,\sigma}(s,y)=0$ for a.e.~$\rho<|y|\leqslant\sigma$. This implies, by Parseval's equality for product of two functions, the following representation of~$s$:
\begin{gather}
	\label{1.5}	s(x)=\int\limits_{-\rho}^{\rho}\zeta_{B,\sigma}(s,y)
	\Phi_{B,\sigma}(x,y)\,dy.
\end{gather}
Conversely, if the function~$s$ is defined by
$$
s(x)=\int\limits_{-\rho}^{\rho}\zeta(y)
\Phi_{B,\sigma}(x,y)\,dy
$$
for some $\zeta\in L_2[-\rho,\rho]$, then $s\in\mathbb{S}_{B,\sigma,\rho}$ and it admits representation~\eqref{1.4} with coefficients
$$
\beta_j=\frac{1}{2\sigma}\int\limits_{-\rho}^{\rho}\zeta(y)e^{i\tfrac{j\pi}{\sigma}y}\,dy,\quad j\in\mathbb{Z}.
$$

If $B$ is the $B$-spline
$$
B_{\sigma,m}(x)=\int\limits_\mathbb{R}\left(\frac{e^{i\frac{\pi}{\sigma}y}-1}
{i\frac{\pi}{\sigma}y}\right)^{m+1}e^{ixy}\,dy,
$$
we find that $\mathbb{S}_{B,\sigma}$ coincides with the space of splines $\mathbf{S}_{\sigma,\mu}$. The corresponding functions $\Phi_{B,\sigma}$ were introduced by Schoenberg (see~\cite{schnb}) and are called \emph{exponential splines}.

In general situation under consideration formula~\eqref{1.5} is a continual analog of decomposition over orthogonal bases in finite dimensional spaces of shifts of periodic functions (see,~\cite{we,me}) 

Let $\mathbb{S}_{B,\sigma,\rho}^1$ be the space of functions $s\in\mathbb{S}_{B,\sigma,\rho}$ for which in representation~\eqref{1.4} we have $\beta\in\ell_1(\mathbb{Z})$. Obviously, $\mathbb{S}_{B,\sigma,\rho}^1$ is dense in $\mathbb{S}_{B,\sigma,\rho}$.

\begin{lem}
	If $s\in\mathbb{S}_{B,\sigma,\rho}^1$, then
	\begin{equation}
		\label{1.6}
		\zeta_{B,\sigma}(s,y)=\frac{1}{2\pi D_{B,\sigma}(y)}\int\limits_{\mathbb{R}}
		s(t)\overline{\Phi_{B,\sigma}(t,y)}\,dt,
	\end{equation}
	 where convergence of the integral on the right-hand side is interpreted in  $L_2[\,-\sigma,\sigma]$.
\end{lem}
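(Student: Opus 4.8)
The plan is to reduce the identity \eqref{1.6} to the single-shift formula \eqref{1.3}, using the quasi-periodicity of $\Phi_{B,\sigma}$ and the absolute convergence granted by $\beta\in\ell_1(\mathbb{Z})$. First I would record the effect of a shift of the generator on the pairing against $\overline{\Phi_{B,\sigma}}$. By the substitution $t\mapsto t+\frac{j\pi}{\sigma}$ together with the relation $\Phi_{B,\sigma}(x+\frac{\pi}{\sigma},y)=e^{i\frac{\pi}{\sigma}y}\Phi_{B,\sigma}(x,y)$ from property~$\Phi$2 (iterated $j$ times), one obtains, for each $j\in\mathbb{Z}$,
$$
\int_{\mathbb{R}} B\left(t-\frac{j\pi}{\sigma}\right)\overline{\Phi_{B,\sigma}(t,y)}\,dt
=e^{-i\frac{j\pi}{\sigma}y}\int_{\mathbb{R}} B(t)\overline{\Phi_{B,\sigma}(t,y)}\,dt
=2\pi\,e^{-i\frac{j\pi}{\sigma}y}\,D_{B,\sigma}(y),
$$
where the last equality is \eqref{1.3} and all integrals are understood as $L_2[-\sigma,\sigma]$-limits in $y$.

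The core difficulty is to interchange the summation in \eqref{1.4} with the improper, $L_2[-\sigma,\sigma]$-valued integral against $\overline{\Phi_{B,\sigma}}$; this is the step I expect to be the main obstacle. To handle it cleanly I would show that the linear map $T\colon f\mapsto\int_{\mathbb{R}}f(t)\overline{\Phi_{B,\sigma}(t,y)}\,dt$ is bounded from $L_2(\mathbb{R})$ into $L_2[-\sigma,\sigma]$. Inserting the expansion \eqref{1.2} and integrating termwise (legitimate because \eqref{1.2} converges in $L_2[-n,n]$ in $x$, exactly as in the proof of~$\Phi$4) gives $Tf(y)=2\pi\sum_{\nu}\overline{\widehat{B}(y+2\nu\sigma)}\,\widehat{f}(y+2\nu\sigma)$; then Cauchy--Schwarz in $\nu$, the bound $D_{B,\sigma}\in L_\infty[-\sigma,\sigma]$ from Remark~2, and Parseval's equality yield $\|Tf\|_{L_2[-\sigma,\sigma]}^2\le 2\pi\|D_{B,\sigma}\|_{L_\infty[-\sigma,\sigma]}\|f\|_{L_2(\mathbb{R})}^2$. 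This simply reuses the Cauchy--Schwarz estimate already carried out in the proof of~$\Phi$4.

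With $T$ bounded, the remainder is routine. Since $\beta\in\ell_1(\mathbb{Z})$, the partial sums $s_N=\sum_{|j|\le N}\beta_j B(\cdot-\frac{j\pi}{\sigma})$ converge to $s$ in $L_2(\mathbb{R})$, so $Ts_N\to Ts$ in $L_2[-\sigma,\sigma]$ by continuity of $T$. On the other hand, linearity and the shift identity above give $Ts_N(y)=2\pi D_{B,\sigma}(y)\sum_{|j|\le N}\beta_j e^{-i\frac{j\pi}{\sigma}y}$, and because $\beta\in\ell_1(\mathbb{Z})$ the trigonometric sum converges uniformly to $\zeta_{B,\sigma}(s,y)$, so after multiplication by $D_{B,\sigma}\in L_\infty[-\sigma,\sigma]$ the convergence persists in $L_2[-\sigma,\sigma]$. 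Passing to the limit therefore yields
$$
\int_{\mathbb{R}} s(t)\overline{\Phi_{B,\sigma}(t,y)}\,dt=2\pi D_{B,\sigma}(y)\,\zeta_{B,\sigma}(s,y)\quad\text{in } L_2[-\sigma,\sigma].
$$
Dividing by $2\pi D_{B,\sigma}(y)$, which is permissible since $1/D_{B,\sigma}\in L_\infty[-\sigma,\sigma]$ by Remark~2 (so multiplication by it maps $L_2[-\sigma,\sigma]$ to itself), gives precisely \eqref{1.6}.
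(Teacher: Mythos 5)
Your proposal is correct, but it follows a genuinely different route from the paper's own proof. The paper never introduces the operator $T$: it works directly with the truncated pairings $\int_{-n}^{n}s(t)\overline{\Phi_{B,\sigma}(t,y)}\,dt$, interchanges the sum over $j$ with the integral over the finite interval, shifts variables via property~$\Phi$2 so that the error becomes
$\sum_{j}\beta_je^{-i\frac{j\pi}{\sigma}y}\left(\int_{-n-\frac{j\pi}{\sigma}}^{n-\frac{j\pi}{\sigma}}B(t)\overline{\Phi_{B,\sigma}(t,y)}\,dt-2\pi D_{B,\sigma}(y)\right)$,
and then applies Minkowski's inequality together with a preliminary uniform bound $\sqrt{2\pi}\,\|D_{B,\sigma}\|^{1/2}_{L_\infty[\,-\sigma,\sigma]}\|B\|_{L_2(\mathbb{R})}$ on each bracket, so that dominated convergence in $j$ (where $\beta\in\ell_1$ is essential) finishes the proof. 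You instead isolate the boundedness of $T\colon L_2(\mathbb{R})\to L_2[\,-\sigma,\sigma]$ as the key functional-analytic fact and deduce \eqref{1.6} by pure continuity from the single-shift identity; your boundedness argument is exactly the computation the paper carries out only \emph{after} Theorem~1, to show that the integral in \eqref{1.9} converges for arbitrary $f\in L_2(\mathbb{R})$, and there is no circularity in moving it forward, since it relies only on $\Phi$3, Cauchy--Schwarz, $D_{B,\sigma}\in L_\infty$ and Parseval. Your route buys two things: it front-loads work the paper must do later anyway, and, more substantively, it proves a stronger statement --- since the partial sums $s_N$ converge to $s$ in $L_2(\mathbb{R})$ for any $\beta\in\ell_2(\mathbb{Z})$ (Riesz property) and the trigonometric sums converge to $\zeta_{B,\sigma}(s)$ in $L_2[\,-\sigma,\sigma]$ already for $\ell_2$ coefficients, your argument yields \eqref{1.6} for every $s\in\mathbb{S}_{B,\sigma,\rho}$, so the $\ell_1$ hypothesis (used by you only for uniform convergence, which is unnecessary) and the subsequent density step in Theorem~1 could be dispensed with for this identity. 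What the paper's approach buys in exchange is economy: it stays entirely at the level of $\Phi$4 and truncated integrals, never needing $T$ to be defined on all of $L_2(\mathbb{R})$ at this stage. One small point of care, which your proof shares with the paper's: the shift identity invokes $\Phi$4 along the non-symmetric expanding intervals $\left[\,-n-\frac{j\pi}{\sigma},n-\frac{j\pi}{\sigma}\right]$, which is legitimate because the proof of $\Phi$4 bounds the error by $\left\|\frac{1}{2\pi}\int_a^bB(t)e^{-iyt}\,dt-\widehat{B}\right\|_{L_2(\mathbb{R})}$, and this tends to zero over any expanding intervals.
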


\begin{proof}
	First we note that for every $a,b\in\mathbb{R}$, $a<b$, the following inequality holds:
	$$
	\left(\int\limits_{-\sigma}^{\sigma}\left|\int\limits_a^b
	B(t)\overline{\Phi_{B,\sigma}(t,y)}\,dt
	-2\pi D_{B,\sigma}(y)\right|^2dy\right)^{1/2}\leqslant
	\sqrt{2\pi}\|D_{B,\sigma}\|^{1/2}_{L_{\infty}[\,-\sigma,\sigma]}
	\|B\|_{L_2(\mathbb{R})}.
	$$	
	Indeed, similarly to the proof of the proposition~$\Phi$4, we obtain the estimate
	\begin{gather*}
		\left(\int\limits_{-\sigma}^{\sigma}\left|\int\limits_a^b
		B(t)\overline{\Phi_{B,\sigma}(t,y)}\,dt
		-2\pi D_{B,\sigma}(y)\right|^2dy\right)^{1/2}\leqslant\\
		\leqslant2\pi\|D_{B,\sigma}\|^{1/2}_{L_{\infty}[\,-\sigma,\sigma]}
		\left(\int\limits_{\mathbb{R}}
		\left|\frac{1}{2\pi}\int\limits_a^b B(t)e^{-iyt}\,dt-\widehat{B}(y)\right|^2dy\right)^{1/2},
	\end{gather*}
	which implies the required inequality by Parseval's identity.
	
	For $n\in\mathbb{N}$, by formulas~\eqref{1.4}, \eqref{1.3}, property~$\Phi$2, and Minkowski inequality, we get
	\begin{gather*}
		\left(\int\limits_{-\sigma}^\sigma\left|
		\int\limits_{-n}^ns(t)\overline{\Phi_{B,\sigma}(t,y)}\,dt-
		2\pi D_{B,\sigma}(y)\zeta_{B,\sigma}(s,y)\right|^2dy\right)^{1/2}=\\
		=\left(\int\limits_{-\sigma}^\sigma\left|
		\sum\limits_{j\in\mathbb{Z}}\beta_j\int\limits_{-n}^n
		B\left(t-\frac{j\pi}{\sigma}\right)\overline{\Phi_{B,\sigma}(t,y)}\,dt
		-2\pi D_{B,\sigma}(y)\zeta_{B,\sigma}(s,y)\right|^2dy\right)^{1/2}=\\
		=\left(\int\limits_{-\sigma}^\sigma\left|
		\sum\limits_{j\in\mathbb{Z}}\beta_j\int\limits_{-n-\frac{j\pi}{\sigma}}^
		{n-\frac{j\pi}{\sigma}}
		B(t)\overline{\Phi_{B,\sigma}\left(t+\frac{j\pi}{\sigma},y\right)}\,dt
		-2\pi D_{B,\sigma}(y)\zeta_{B,\sigma}(s,y)\right|^2dy\right)^{1/2}=\\
		=\left(\int\limits_{-\sigma}^\sigma\left|
		\sum\limits_{j\in\mathbb{Z}}\beta_je^{-i\tfrac{j\pi}{\sigma}y}\left(
		\int\limits_{-n-\frac{j\pi}{\sigma}}^{n-\frac{j\pi}{\sigma}}
		B(t)\overline{\Phi_{B,\sigma}(t,y)}\,dt
		-2\pi D_{B,\sigma}(y)\right)\right|^2dy\right)^{1/2}\leqslant\\
		\leqslant\sum\limits_{j\in\mathbb{Z}}|\beta_j|
		\left(\int\limits_{-\sigma}^\sigma\left|\int\limits_{-n-\frac{j\pi}{\sigma}}^{n-\frac{j\pi}{\sigma}}
		B(t)\overline{\Phi_{B,\sigma}(t,y)}\,dt
		-2\pi D_{B,\sigma}(y)\right|^2dy\right)^{1/2}\xrightarrow[n\to\infty]{}0.
	\end{gather*}
	Termwise passing to the limit is valid, because for every $j\in\mathbb{Z}$ the expression in brackets tends to $0$ by property~$\Phi$4, while summability of $\beta$ and the remark at the beginning of the proof provide the estimate
	\begin{gather*}
		\sum\limits_{j\in\mathbb{Z}}|\beta_j|
		\left(\int\limits_{-\sigma}^\sigma\left|\int\limits_{-n-\frac{j\pi}{\sigma}}^{n-\frac{j\pi}{\sigma}}
		B(t)\overline{\Phi_{B,\sigma}(t,y)}\,dt
		-2\pi D_{B,\sigma}(y)\right|^2dy\right)^{1/2}\leqslant\\
		\leqslant \sqrt{2\pi}\|D_{B,\sigma}\|^{1/2}_{L_{\infty}[\,-\sigma,\sigma]}
		\|B\|_{L_2(\mathbb{R})}\sum\limits_{j\in\mathbb{Z}}|\beta_j|<\infty.
	\end{gather*}
	
	Term-by-term integration over a finite interval is valid since for every $n\in\mathbb{N}$ we have
	\begin{gather*}
		\int\limits_{-n}^n\sum\limits_{j\in\mathbb{Z}}|\beta_j|\left|B\left(t-\frac{j\pi}{\sigma}\right)\right|\left|\Phi_{B,\sigma}(t,y)\right|\,dt=
		\sum\limits_{j\in\mathbb{Z}}|\beta_j|\int\limits_{-n}^n\left|B\left(t-\frac{j\pi}{\sigma}\right)\right|\left|\Phi_{B,\sigma}(t,y)\right|\,dt\leqslant\\
		\leqslant\sum\limits_{j\in\mathbb{Z}}|\beta_j|
		\left(\int\limits_{-n}^n\left|B\left(t-\frac{j\pi}{\sigma}\right)\right|^2dt\right)^{1/2}\left(\int\limits_{-n}^n\left|\Phi_{B,\sigma}(t,y)\right|^2dt\right)^{1/2}\leqslant\\
		\leqslant
		\|B\|_{L_2(\mathbb{R})}\left(\int\limits_{-n}^n\left|\Phi_{B,\sigma}(t,y)\right|^2dt\right)^{1/2}\sum\limits_{j\in\mathbb{Z}}|\beta_j|<\infty.
	\end{gather*}
\end{proof}	

The following theorem is an analog of the Plancherel theorem for elements of spaces of shifts.

\begin{thm}
	If $S,s\in\mathbb{S}_{B,\sigma,\rho}$, then
	\begin{align}
		\label{1.7}
		\int\limits_{\mathbb{R}}|s(x)|^2dx&=
		2\pi\int\limits_{-\rho}^{\rho}\left|\zeta_{B,\sigma}(s,y)\right|^2
		D_{B,\sigma}(y)\,dy,\\
		\label{1.8}
		\int\limits_{\mathbb{R}}S(x)\overline{s(x)}\,dx&=
		2\pi\int\limits_{-\rho}^{\rho}\zeta_{B,\sigma}(S,y)
		\overline{\zeta_{B,\sigma}(s,y)}D_{B,\sigma}(y)\,dy.
	\end{align}
\end{thm}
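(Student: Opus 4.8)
The plan is to establish the sesquilinear identity~\eqref{1.8} first and then deduce~\eqref{1.7} as the special case $S=s$. Since both sides of~\eqref{1.8} are bounded sesquilinear forms on $\mathbb{S}_{B,\sigma,\rho}$, it will suffice to verify the identity for $s$ in the dense subspace $\mathbb{S}^1_{B,\sigma,\rho}$, where formula~\eqref{1.6} of the preceding Lemma is available, and then pass to the limit. For the boundedness and the density argument I would first record the norm equivalence
$$
\|s\|^2_{L_2(\mathbb{R})}\asymp\|\beta\|^2_{\ell_2(\mathbb{Z})}=\frac{1}{2\sigma}\|\zeta_{B,\sigma}(s)\|^2_{L_2[\,-\sigma,\sigma]},
$$
which follows from the Riesz condition~\eqref{1.222} together with Parseval's equality for the orthogonal system $\{e^{-ij\pi y/\sigma}\}_{j\in\mathbb{Z}}$ on $[\,-\sigma,\sigma]$; since $\zeta_{B,\sigma}(s)$ vanishes off $[\,-\rho,\rho]$, the right-hand side equals $\frac{1}{2\sigma}\|\zeta_{B,\sigma}(s)\|^2_{L_2[\,-\rho,\rho]}$. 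Combined with $\|D_{B,\sigma}\|_{L_\infty[\,-\sigma,\sigma]}<\infty$, this shows both sides of~\eqref{1.8} depend continuously on the pair $(\zeta_{B,\sigma}(S),\zeta_{B,\sigma}(s))\in L_2[\,-\rho,\rho]$, hence continuously on $(S,s)$.

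For the core computation, fix $S\in\mathbb{S}_{B,\sigma,\rho}$ and $s\in\mathbb{S}^1_{B,\sigma,\rho}$. By~\eqref{1.6}, the function $2\pi D_{B,\sigma}(y)\zeta_{B,\sigma}(s,y)$ (which lies in $L_2[\,-\sigma,\sigma]$ since $D_{B,\sigma}$ is bounded) is the $L_2[\,-\sigma,\sigma]$-limit of $\int_{-n}^{n}s(t)\overline{\Phi_{B,\sigma}(t,y)}\,dt$ as $n\to\infty$. Using that $D_{B,\sigma}$ is real and that $\zeta_{B,\sigma}(S)$ is supported in $[\,-\rho,\rho]$, I would rewrite the right-hand side of~\eqref{1.8} as the inner product
$$
\left\langle\zeta_{B,\sigma}(S),\,2\pi D_{B,\sigma}\,\zeta_{B,\sigma}(s)\right\rangle_{L_2[\,-\sigma,\sigma]}
=\lim_{n\to\infty}\int\limits_{-\sigma}^{\sigma}\zeta_{B,\sigma}(S,y)\int\limits_{-n}^{n}\overline{s(t)}\,\Phi_{B,\sigma}(t,y)\,dt\,dy,
$$
the limit being legitimate by continuity of the inner product in its second argument. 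For each fixed $n$ the double integral is absolutely convergent: applying the Cauchy--Schwarz inequality first in $t$ and then in $y$, and using the $\frac{\pi}{\sigma}$-periodicity of $|\Phi_{B,\sigma}(\cdot,y)|$ from property~$\Phi$2 together with the bound of property~$\Phi$1, one dominates it by $\|s\|_{L_2(\mathbb{R})}\|\zeta_{B,\sigma}(S)\|_{L_2[\,-\sigma,\sigma]}$ times a finite factor depending only on $n$. Fubini's theorem then permits interchanging the order of integration, giving
$$
\int\limits_{-n}^{n}\overline{s(t)}\left(\int\limits_{-\sigma}^{\sigma}\zeta_{B,\sigma}(S,y)\,\Phi_{B,\sigma}(t,y)\,dy\right)dt
=\int\limits_{-n}^{n}\overline{s(t)}\,S(t)\,dt,
$$
where the inner $y$-integral collapses to $S(t)$ by representation~\eqref{1.5} (the range $[\,-\sigma,\sigma]$ reducing to $[\,-\rho,\rho]$ by the support of $\zeta_{B,\sigma}(S)$). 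Letting $n\to\infty$ and using $S,s\in L_2(\mathbb{R})$ yields~\eqref{1.8} for all $S\in\mathbb{S}_{B,\sigma,\rho}$ and $s\in\mathbb{S}^1_{B,\sigma,\rho}$.

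To finish, I would extend the identity to arbitrary $s\in\mathbb{S}_{B,\sigma,\rho}$: choosing $s_k\in\mathbb{S}^1_{B,\sigma,\rho}$ with $s_k\to s$ in $L_2(\mathbb{R})$, the norm equivalence gives $\zeta_{B,\sigma}(s_k)\to\zeta_{B,\sigma}(s)$ in $L_2[\,-\rho,\rho]$, and by the continuity noted above both sides of~\eqref{1.8} pass to the limit, establishing~\eqref{1.8} in full generality; setting $S=s$ then gives~\eqref{1.7}. I expect the main obstacle to be precisely this core computation, and in particular the fact that $\int_{\mathbb{R}}s(t)\overline{\Phi_{B,\sigma}(t,y)}\,dt$ exists only in the $L_2[\,-\sigma,\sigma]$-sense and not pointwise; this forces the truncation-and-limit device above, with a careful Fubini justification at each finite $n$, rather than a single naive interchange of $\int_{\mathbb{R}}$ and $\int_{-\rho}^{\rho}$.
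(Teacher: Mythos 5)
Your proposal is correct and uses essentially the same machinery as the paper: representation~\eqref{1.5}, Lemma~1 (formula~\eqref{1.6}), truncation to $[\,-n,n]$ with a Fubini interchange on finite rectangles justified by Cauchy--Schwarz together with properties~$\Phi$1--$\Phi$2, and extension from the dense subspace $\mathbb{S}^1_{B,\sigma,\rho}$ by continuity. The only difference is organizational: the paper establishes the quadratic identity~\eqref{1.7} first and obtains~\eqref{1.8} by polarization (``in a standard way''), whereas you prove the sesquilinear identity~\eqref{1.8} directly and read off~\eqref{1.7} as the diagonal case $S=s$.
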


\begin{proof}
	Let us prove~\eqref{1.7}. It suffices to check the equality for $s\in\mathbb{S}_{B,\sigma,\rho}^1$; it will hold for $s\in\mathbb{S}_{B,\sigma,\rho}$ by continuity.
	
	Using~\eqref{1.5}~and~\eqref{1.6}, we derive to
	\begin{gather*}
		\int\limits_{\mathbb{R}}|s(x)|^2dx=
		\int\limits_{\mathbb{R}}s(x)\overline{s(x)}\,dx=\lim\limits_{n\to\infty}
		\int\limits_{-n}^n\overline{s(x)}\int\limits_{-\rho}^{\rho}
		\zeta_{B,\sigma}(s,y)\Phi_{B,\sigma}(x,y)\,dy\,dx=\\
		=\lim\limits_{n\to\infty}\int\limits_{-\rho}^{\rho}
		\zeta_{B,\sigma}(s,y)\int\limits_{-n}^n\overline{s(x)}
		\Phi_{B,\sigma}(x,y)\,dx\,dy=\\
		=\lim\limits_{n\to\infty}\int\limits_{-\rho}^{\rho}
		\zeta_{B,\sigma}(s,y)\overline{\int\limits_{-n}^ns(x)
			\overline{\Phi_{B,\sigma}(x,y)}\,dx}\,dy
		=2\pi\int\limits_{-\rho}^{\rho}\left|\zeta_{B,\sigma}(s,y)\right|^2
		D_{B,\sigma}(y)\,dy.
	\end{gather*}
	Passage to the limit under the integral sign is valid by Lemma~1.
	
	Interchanging the order of integration is justified since for any $n\in\mathbb{N}$ the function
	$$
	(x,y)\mapsto \overline{s(x)}\zeta_{B,\sigma}(s,y)\Phi_{B,\sigma}(x,y)
	$$ 
	is integrable on $[\,-n,n]\times[\,-\sigma,\sigma]$. Indeed,
	\begin{gather*}
		\int\limits_{-n}^n|s(x)|\int\limits_{-\sigma}^{\sigma}
		|\zeta_{B,\sigma}(s,y)|\left|\Phi_{B,\sigma}(x,y)\right|\,dy\,dx
		\leqslant\\
		\leqslant\left(\int\limits_{-\sigma}^{\sigma}|\zeta_{B,\sigma}(s,y)|^2
		dy\right)^{1/2}\int\limits_{-n}^n|s(x)|
		\left(\int\limits_{-\sigma}^{\sigma}\left|\Phi_{B,\sigma}(x,y)\right|^2dy\right)^{1/2}dx\leqslant\\
		\leqslant\left(\int\limits_{-\sigma}^{\sigma}|\zeta_{B,\sigma}(s,y)|^2
		dy\right)^{1/2}
		\left(\int\limits_{-n}^n|s(x)|^2dx\right)^{1/2}
		\left(\int\limits_{-n}^n\int\limits_{-\sigma}^{\sigma}
		|\Phi_{B,\sigma}(x,y)|^2
		dy\,dx\right)^{1/2}<\infty.
	\end{gather*}
	
	Formula~\eqref{1.8} is derived from~\eqref{1.7} in a standard way.
\end{proof}

The Fourier transform and the partial Fourier integral of the function~$f\in L_2(\mathbb{R})$ with respect to the system~$\Phi_{B,\sigma}$ are defined as follows:
\begin{align}
	\label{1.9}
	\zeta_{B,\sigma}(f,y)&=\frac{1}{2\pi D_{B,\sigma}(y)}\int\limits_{\mathbb{R}}
	f(t)\overline{\Phi_{B,\sigma}(t,y)}\,dt,\\
	\label{1.10}
	J_{B,\sigma,\rho}(f,x)&=
	\int\limits_{-\rho}^{\rho}\zeta_{B,\sigma}(f,y)\Phi_{B,\sigma}(x,y)\,dy,
	\quad 0<\rho\leqslant\sigma.
\end{align}
Let us show that the integral on the right-hand side of~\eqref{1.9} converges in $L_2[\,-\sigma,\sigma]$. It suffices to prove that the sequence
\begin{equation}\label{1.1010}
	\left\lbrace y\mapsto\int\limits_{-n}^n
	f(t)\overline{\Phi_{B,\sigma}(t,y)}\,dt\right\rbrace_{n\in\mathbb{N}} 	
\end{equation}
is a Cauchy sequence in $L_2[\,-\sigma,\sigma]$. Take $\varepsilon>0$. Since the sequence
$$
\left\lbrace y\mapsto\int\limits_{-n}^n
f(t)e^{-iyt}\,dt\right\rbrace_{n\in\mathbb{N}} 
$$
is convergent in $L_2(\mathbb{R})$, there exists such number $N\in\mathbb{N}$ that for every $n,k\in\mathbb{N}$, $n>k>N$, the following inequality holds:
$$
\int\limits_{\mathbb{R}}\left|\;\int\limits_{[\,-n,n]\setminus[\,-k,k]}
f(t)e^{-iyt}\,dt\right|^2dy<\frac{\varepsilon}{\|D_{B,\sigma}\|_{L_{\infty}[\,-\sigma,\sigma]}}.
$$

Therefore, for such $n$ and $k$, we have the estimate
\begin{gather*}
	\int\limits_{-\sigma}^\sigma\left|\int\limits_{-n}^n
	f(t)\overline{\Phi_{B,\sigma}(t,y)}\,dt-\int\limits_{-k}^k
	f(t)\overline{\Phi_{B,\sigma}(t,y)}\,dt\right|^2dy=\\
	=\int\limits_{-\sigma}^\sigma\left|\;\int\limits_{[\,-n,n]
		\setminus[\,-k,k]}f(t)\overline{\Phi_{B,\sigma}(t,y)}\,dt\right|^2dy=\\
	=\int\limits_{-\sigma}^\sigma\left|\sum\limits_{\nu\in\mathbb{Z}}
	\overline{\widehat{B}(y+2\nu\sigma)}\int\limits_{[\,-n,n]
		\setminus[\,-k,k]}f(t)e^{-i(y+2\nu\sigma)t}\,dt\right|^2dy\leqslant\\
	\leqslant\|D_{B,\sigma}\|_{L_{\infty}[\,-\sigma,\sigma]}
	\int\limits_{-\sigma}^\sigma\left(\sum\limits_{\nu\in\mathbb{Z}}\left|\;\int\limits_{[\,-n,n]
		\setminus[\,-k,k]}f(t)e^{-i(y+2\nu\sigma)t}\,dt\right|^2\right)\,dy=\\
	=\|D_{B,\sigma}\|_{L_{\infty}[\,-\sigma,\sigma]}\int\limits_{\mathbb{R}}\left|\;\int\limits_{[\,-n,n]\setminus[\,-k,k]}
	f(t)e^{-iyt}\,dt\right|^2dy<\varepsilon.	
\end{gather*}	
This means that~\eqref{1.1010} is a Cauchy sequence in $L_2[\,-\sigma,\sigma]$.

Since $1/D_{B,\sigma}\in L_{\infty}[\,-\sigma,\sigma]$, we get $\zeta_{B,\sigma}(f)\in L_2[\,-\sigma,\sigma]$, which implies $J_{B,\sigma,\rho}(f)\in\mathbb{S}_{B,\sigma,\rho}$ and, in particular, $J_{B,\sigma,\rho}(f)\in L_2(\mathbb{R})$.

Furthermore, for $f\in L_2(\mathbb{R})$ and $s\in\mathbb{S}_{B,\sigma,\rho}$, by formulas~\eqref{1.5}~and~\eqref{1.9}, we obtain
\begin{gather*}
	\int\limits_{\mathbb{R}}f(t)\overline{s(t)}\,dt=\lim\limits_{n\to\infty}
	\int\limits_{-n}^n f(t)\overline{s(t)}\,dt=\lim\limits_{n\to\infty}
	\int\limits_{-n}^n f(t)\int\limits_{-\rho}^{\rho}\overline{\zeta_{B,\sigma}(s,y)}
	\overline{\Phi_{B,\sigma}(t,y)}\,dy\,dt=\\
	=\lim\limits_{n\to\infty}\int\limits_{-\rho}^{\rho}
	\overline{\zeta_{B,\sigma}(s,y)}\int\limits_{-n}^nf(t)\overline{\Phi_{B,\sigma}(t,y)}\,dt\,dy=\\
	=2\pi\int\limits_{-\rho}^{\rho}\overline{\zeta_{B,\sigma}(s,y)}
	\zeta_{B,\sigma}(f,y)D_{B,\sigma}(y)\,dy,
\end{gather*}
while the equality~\eqref{1.8} yields
\begin{gather*}
	\int\limits_{\mathbb{R}}J_{B,\sigma,\rho}(f,t)\overline{s(t)}\,dt=
	2\pi\int\limits_{-\rho}^{\rho}\overline{\zeta_{B,\sigma}(s,y)}
	\zeta_{B,\sigma}(f,y)D_{B,\sigma}(y)\,dy.
\end{gather*}

This implies that $f-J_{B,\sigma,\rho}(f)\perp\mathbb{S}_{B,\sigma,\rho}$,
i.e., that $J_{B,\sigma,\rho}(f)$ is an orthogonal projection of $f$ on $\mathbb{S}_{B,\sigma,\rho}$. In addition, 
\begin{gather}
	\label{1.11}
	2\pi\int\limits_{-\rho}^{\rho}\left|\zeta_{B,\sigma}(f,y)\right|^2
	D_{B,\sigma}(y)\,dy=
	\int\limits_{\mathbb{R}}\left|J_{B,\sigma,\rho}(f,x)\right|^2dx\leqslant
	\int\limits_{\mathbb{R}}|f(x)|^2\,dx.
\end{gather}

Now we provide an explicit expression of the new Fourier transform and best approximation by spaces of shifts in terms of trigonometric Fourier transform.

\begin{thm}
	If $f\in L_2(\mathbb{R})$, then
	\begin{gather}
		\label{1.12}
		\zeta_{B,\sigma}(f)=\frac{1}{D_{B,\sigma}}\sum\limits_{k\in\mathbb{Z}}
		\overline{\widehat{B}(\cdot+2k\sigma)}\widehat{f}(\cdot+2k\sigma)\;
		\text{ in }\;L_2[\,-\sigma,\sigma],\\
		\label{1.13}
		E^2(f,\mathbb{S}_{B,\sigma,\rho})_2=
		2\pi\left(\int\limits_{\mathbb{R}}\left|\widehat{f}(y)\right|^2dy-
		\int\limits_{-\rho}^{\rho}\frac{1}{D_{B,\sigma}(y)}
		\left|\sum\limits_{k\in\mathbb{Z}}
		\overline{\widehat{B}(y+2k\sigma)}\widehat{f}(y+2k\sigma)\right|^2dy\right).
	\end{gather}
\end{thm}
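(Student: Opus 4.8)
The plan is to obtain \eqref{1.12} directly from the definition \eqref{1.9} together with the expansion \eqref{1.2}, and then to read off \eqref{1.13} from the orthogonal-projection property of $J_{B,\sigma,\rho}$ already established above. For \eqref{1.12} I would start from \eqref{1.9} and substitute the conjugate of \eqref{1.2}, namely $\overline{\Phi_{B,\sigma}(t,y)}=\sum_{\nu\in\mathbb{Z}}\overline{\widehat{B}(y+2\nu\sigma)}\,e^{-i(y+2\nu\sigma)t}$. Writing $F_n(y)=\int_{-n}^n f(t)\overline{\Phi_{B,\sigma}(t,y)}\,dt$ and using that for fixed $n$ the series \eqref{1.2} converges in $L_2[-n,n]$ with respect to $t$, termwise inner multiplication against $f\in L_2[-n,n]$ gives $F_n(y)=2\pi\sum_\nu\overline{\widehat{B}(y+2\nu\sigma)}\,g_n(y+2\nu\sigma)$, where $g_n(y)=\frac{1}{2\pi}\int_{-n}^n f(t)e^{-iyt}\,dt\to\widehat{f}$ in $L_2(\mathbb{R})$ by the definition of the Fourier transform on $L_2$.

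The passage to the limit in $L_2[-\sigma,\sigma]$ is then the Cauchy-type estimate already carried out just before the theorem: by the Cauchy--Schwarz inequality in $\nu$ together with $D_{B,\sigma}\in L_\infty[-\sigma,\sigma]$ (Remark~2),
$$
\int_{-\sigma}^\sigma\Bigl|\tfrac{1}{2\pi}F_n(y)-\sum_\nu\overline{\widehat{B}(y+2\nu\sigma)}\widehat{f}(y+2\nu\sigma)\Bigr|^2dy\leqslant\|D_{B,\sigma}\|_{L_\infty[-\sigma,\sigma]}\int_{\mathbb{R}}|g_n-\widehat{f}|^2\xrightarrow[n\to\infty]{}0.
$$
Since $\tfrac{1}{2\pi}F_n\to D_{B,\sigma}\,\zeta_{B,\sigma}(f)$ in $L_2[-\sigma,\sigma]$ by \eqref{1.9}, and since $1/D_{B,\sigma}\in L_\infty[-\sigma,\sigma]$ (Remark~2), division by $D_{B,\sigma}$ yields \eqref{1.12}.

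For \eqref{1.13} I would use that $\mathbb{S}_{B,\sigma,\rho}$ is closed in $L_2(\mathbb{R})$ (Remark~3) and that $J_{B,\sigma,\rho}(f)$ is the orthogonal projection of $f$ onto it, as shown above; hence the best approximation is attained at $J_{B,\sigma,\rho}(f)$ and, by the Pythagorean identity, $E^2(f,\mathbb{S}_{B,\sigma,\rho})_2=\|f\|_{L_2(\mathbb{R})}^2-\|J_{B,\sigma,\rho}(f)\|_{L_2(\mathbb{R})}^2$. The first term equals $2\pi\int_\mathbb{R}|\widehat{f}|^2$ by Plancherel's theorem (with the normalization fixed in Section~1, under which $\|h\|_{L_2(\mathbb{R})}^2=2\pi\|\widehat{h}\|_{L_2(\mathbb{R})}^2$, as already used in the proof of $\Phi$3), while the second equals $2\pi\int_{-\rho}^\rho|\zeta_{B,\sigma}(f,y)|^2 D_{B,\sigma}(y)\,dy$ by \eqref{1.11}. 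Substituting \eqref{1.12} into the latter, one power of $D_{B,\sigma}(y)$ cancels against the explicit factor, leaving $\frac{1}{D_{B,\sigma}(y)}\bigl|\sum_k\overline{\widehat{B}(y+2k\sigma)}\widehat{f}(y+2k\sigma)\bigr|^2$, and combining the two terms gives \eqref{1.13}.

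I expect the only genuine subtlety to lie in justifying the termwise operations and the $L_2[-\sigma,\sigma]$ passage to the limit in \eqref{1.12}; but this is exactly the estimate already run in establishing convergence of the integral in \eqref{1.9}, so it transfers essentially verbatim. The remainder---Plancherel, the projection/Pythagoras step, and the algebraic cancellation of $D_{B,\sigma}$---is routine once \eqref{1.12} and the orthogonal-projection property of $J_{B,\sigma,\rho}$ are in hand.
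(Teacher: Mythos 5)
Your proposal is correct and follows essentially the same route as the paper: for \eqref{1.12}, expand $\int_{-n}^n f(t)\overline{\Phi_{B,\sigma}(t,y)}\,dt$ termwise via \eqref{1.2}, apply the Cauchy--Schwarz inequality in the shift index together with Parseval to reduce the error to $\int_{\mathbb{R}}|g_n-\widehat{f}\,|^2\to 0$, and for \eqref{1.13}, combine the orthogonal-projection (Pythagoras) identity with Plancherel, \eqref{1.11}, and substitution of \eqref{1.12}. The only (cosmetic) difference is that you divide by $D_{B,\sigma}$ at the end using $1/D_{B,\sigma}\in L_\infty[\,-\sigma,\sigma]$, whereas the paper carries the factor $1/D_{B,\sigma}$ inside the limit; your bookkeeping of that factor is, if anything, slightly cleaner.
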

\begin{proof}
	By formulas~\eqref{1.9},~\eqref{1.2} and the Cauchy–Schwarz inequality, we have 
	\begin{gather*}	
		\int\limits_{-\sigma}^\sigma\left|\zeta_{B,\sigma}(f,y)-\frac{1}{D_{B,\sigma}(y)}\sum\limits_{k\in\mathbb{Z}}
		\overline{\widehat{B}(y+2k\sigma)}\widehat{f}(y+2k\sigma)\right|^2dy=\\
		=\lim\limits_{n\to\infty}\int\limits_{-\sigma}^\sigma\left|\frac{1}{D_{B,\sigma}(y)}\sum\limits_{k\in\mathbb{Z}}
		\overline{\widehat{B}(y+2k\sigma)}\left(\frac{1}{2\pi}\int\limits_{-n}^nf(t)e^{-i(y+2k\sigma)t}dt-\widehat{f}(y+2k\sigma)\right)\right|^2dy\leqslant\\
		\leqslant\lim\limits_{n\to\infty}\int\limits_{-\sigma}^\sigma
		\sum\limits_{k\in\mathbb{Z}}\left|\frac{1}{2\pi}\int\limits_{-n}^nf(t)e^{-i(y+2k\sigma)t}dt-\widehat{f}(y+2k\sigma)\right|^2dy=\\
		=\lim\limits_{n\to\infty}\int\limits_{\mathbb{R}}\left|\frac{1}{2\pi}\int\limits_{-n}^nf(t)e^{-iyt}dt-\widehat{f}(y)\right|^2dy=0.
	\end{gather*}	
	
	Using properties of orthogonal projection, we get
	$$
	E^2(f,\mathbb{S}_{B,\sigma,\rho})_2=\|f-J_{B,\sigma,\rho}f\|^2_{L_2(\mathbb{R})}=
	\|f\|^2_{L_2(\mathbb{R})}-\|J_{B,\sigma,\rho}f\|^2_{L_2(\mathbb{R})}.
	$$
	This implies, by the Plancherel theorem and formulas~\eqref{1.11} and~\eqref{1.12}, the following equality:
	\begin{gather*}
		E^2(f,\mathbb{S}_{B,\sigma,\rho})_2=2\pi\int\limits_{\mathbb{R}}\left|\widehat{f}(y)\right|^2dy-
		2\pi\int\limits_{-\rho}^{\rho}\left|\zeta_{B,\sigma}(f,y)\right|^2	D_{B,\sigma}(y)\,dy=\\
		=2\pi\left(\int\limits_{\mathbb{R}}\left|\widehat{f}(y)\right|^2dy-
		\int\limits_{-\rho}^{\rho}\frac{1}{D_{B,\sigma}(y)}
		\left|\sum\limits_{k\in\mathbb{Z}}
		\overline{\widehat{B}(y+2k\sigma)}\widehat{f}(y+2k\sigma)\right|^2dy\right).
	\end{gather*}
\end{proof}

Note that particular case of Fourier analysis in spline spaces was developed by Vinogradov~\cite{ol5}.


The results obtained have several applications. First of all, we intend to use formula~\eqref{1.13} to derive a criteria of optimality of spaces $\mathbb{S}_{B,\sigma}$ for various classes of functions in $L_2(\mathbb{R})$. Secondly, we expect to apply the results for counting the average dimension of the spaces $\mathbb{S}_{B,\sigma,\rho}$.

\subsection*{Funding}
The reported study was funded by RFBR according to the research project № 20-31-90025.

\subsection*{Competing interests}
The author has no competing interests to declare.

\subsection*{Data availability}
The manuscript has no associated data.

\end{document}